\newcommand{\R}{\mathbb{R}}
\theoremstyle{plain}
\newtheorem{theorem}{Theorem}[section]
\theoremstyle{definition}
\newtheorem{definition}[theorem]{Definition}
\newtheorem{notation}[theorem]{Notation}
\newtheorem{example}[theorem]{Example}
\newtheorem{corollary}[theorem]{Corollary}
\numberwithin{equation}{section}
\begin{document}

\title[Eigenvalues of a third order BVP]{Eigenvalues of a third order BVP subject to functional BCs} 

\author[G. Infante]{Gennaro Infante}
\address{Gennaro Infante, Dipartimento di Matematica e Informatica, Universit\`{a} della
Calabria, 87036 Arcavacata di Rende, Cosenza, Italy}%
\email{gennaro.infante@unical.it}%

\author[P. Lucisano]{Paolo Lucisano}
\address{Paolo Lucisano,
Liceo Scientifico Statale ``Alessandro Volta'', Via Modena S. Sperato, 89133 Reggio Calabria, Italy}%
\email{paolo.lucisano@hotmail.com}%

\begin{abstract} 
We discuss the existence of eigenvalues for a third order boundary value problem subject to functional boundary conditions and higher order derivative dependence in the nonlinearities. 
We prove the existence of positive and negative eigenvalues and provide a localization of the corresponding eigenfunctions in terms of their norm.
The methodology involves a version of the classical Birkhoff--Kellogg theorem. We illustrate the applicability of the theoretical results in an example.
\end{abstract}

\subjclass[2020]{Primary 34B08, secondary 34B10, 47H30}

\keywords{Eigenvalue, eigenfunction, functional boundary condition, Birkhoff-Kellogg theorem}

\maketitle

\section{Introduction}
The solvability of third order boundary value problems~(BVPs), under a variety of nonlocal boundary conditions, has been investigated in the past few years by a number of authors; we refer the reader to the 
papers~\cite{and-dav, ca-lo-min, acgi2, jg-jw-na, jg-by1, jg-by2, gi-pp-MB, Smi1, Smi2, Smi3, Gab-Mira1, Gab-Mira2, Webb-Con} 
and the references therein. In particular, in the  manuscripts~\cite{Smi1,Smi2}
Smirnov studied the solvability of the~BVP
\begin{equation}
\label{BVP-Smirnov}
\begin{cases}
u'''(t)+f(t,u(t),u'(t),u''(t))=0, \  t\in (0,1), \\
u(0)=u(1)=
\int_{0}^{1} u(t)\,dt=0.
\end{cases}
\end{equation}
The methodology in~\cite{Smi1} relies on metric fixed point theory, namely the classical Banach fixed point theorem and a recent fixed point theorem due to Rus, while the approach in~\cite{Smi2} is of topological nature, and is based on the celebrated Leray--Schauder continuation principle. 

Here we discuss a parameter-dependent variation of the BVP~\eqref{BVP-Smirnov}, under the presence of higher order derivatives and additional functional conditions at the boundary, namely 
\begin{equation}
\label{BVPintro}
\begin{cases}
u'''(t)+\lambda f(t,u(t),u'(t),u''(t))=0 , \ t \in (0,1), \\
u(0)=\lambda H_1[u],\ u(1)=\lambda H_2[u],\
\int_{0}^{1} u(t)\,dt=0,
\end{cases}
\end{equation}
where $f$ is a continuous function and $H_1, H_2$ are suitable continuous functionals and $\lambda$ is a parameter. Our approach relies on a version of the classical  Birkhoff-Kellogg invariant direction theorem, combined with ideas from the recent paper~\cite{giaml22}, where a similar parameter dependent problem was investigated  in the case of a cantilever equation. Note that, due to presence of the nonlocal condition  
$\int_{0}^{1} u(t)\,dt=0$, positive solutions of the BVP~\eqref{BVPintro} do not exist; therefore, instead of seeking solutions within a cone of positive functions as in~\cite{giaml22}, we utilize balls centered at the origin in the space $C^2[0,1]$. We prove existence of eigenvalues of opposite sign for the BVP~\eqref{BVPintro}, with a  localization of the associated eigenfunctions in terms of their  $C^2$-norm.
We illustrate the applicability of our theoretical results in an example, where we compute the constants that occur in our theory.  Our results are new and complement the ones in~\cite{Smi1,Smi2}.

\section{Eigenvalues existence}
We begin this Section by associating to the BVP~\eqref{BVPintro} a suitable perturbed Hammerstein integral equation. 

Firstly we recall that it is known (see for example Proposition~1 of~\cite{Smi1}) that, for $h\in C[0,1]$, the unique solution of the linear BVP
$$
\begin{cases}
u^{'''}(t)+h(t)=0,\ t \in (0,1),\\ u(0)=u(1)=\int_{0}^{1}u(t)\,dt=0,
\end{cases}
$$
is given by
\begin{equation*}
u(t)=\int_{0}^{1} k(t,s) h(s)\,ds,
\end{equation*}
where
\begin{equation*}
k(t,s)=\dfrac{1}{2}
\begin{cases}
s^2(1-t)(3t-2ts-1), & \text{if} \ 0\le s\le t\le 1, \\
t(1-s)^2(2ts+t-2s), & \text{if} \ 0 \le t\le s\le 1.
\end{cases}
\end{equation*} 
By direct computation we obtain
\begin{equation*}
\dfrac{\partial}{\partial t} k(t,s)=
\begin{cases}
s^2[s(2t-1)-3t+2], & \text{if} \ 0\le s\le t\le 1, \\
(1-s)^2[s(2t-1)+t], & \text{if} \ 0 \le t\le s\le 1,
\end{cases}  
\end{equation*}
and
\begin{equation*}
\dfrac{\partial^2}{\partial t^2} k(t,s)=
\begin{cases}
s^2(2s-3), & \text{if} \ 0\le s< t\le 1, \\
(1-s)^2(2s+1), & \text{if} \ 0 \le t< s\le 1.
\end{cases}
\end{equation*}
We observe that
\begin{equation*}
\frac{\partial^2 k}{\partial t^2}(0,s) \geq 0\ \text{for}\ s\in (0,1] ,
\end{equation*}
and
\begin{equation*}
\frac{\partial^2 k}{\partial t^2}(1,s) \leq 0\ \text{for}\ s \in \ [0,1).
\end{equation*}

Secondly, note that 
\begin{equation*}
\gamma_1 (t)=1-4t+3t^2
\end{equation*} is the unique solution of the BVP
\begin{equation*}
\gamma_1^{'''}(t)=0,\; \gamma_1(0)=1,\; \gamma_1(1)=0,\; \int_{0}^{1} \gamma_1(t)\,dt=0,
\end{equation*}
while
\begin{equation*}
\gamma_2 (t)=-2t+3t^2
\end{equation*} is the unique solution of the BVP
\begin{equation*}
\gamma_2^{'''}(t)=0,\; \gamma_2(0)=0,\; \gamma_2(1)=1,\;  \int_{0}^{1} \gamma_2(t)\,dt=0.
\end{equation*}
A direct calculation  yields
\begin{equation*}
\gamma_1^{''}(t)=\gamma_2^{''}(t)=6, \  \text{for}\ t \in[0,1].
\end{equation*}

With the above ingredients at our disposal, we associate to the BVP~\eqref{BVPintro} the perturbed Hammerstein integral equation \begin{equation}
\label{PHIE}
u(t)=\lambda\Bigl(\gamma_1(t)H_1[u]+\gamma_2(t)H_2[u]+\int_{0}^{1}k(t,s)f(s,u(s),u'(s),u''(s))\,ds\Bigr).
\end{equation} 
We work in the space $C^2[0,1]$ endowed with the norm
$$\|u\|_2:=\max_{j=0,1,2}\{\|u^{(j)}\|_\infty\},\ \text{where}\ \|w\|_\infty=\sup_{t\in [0,1]}|w(t)|.$$

\begin{definition} We say that $\lambda$ is an \emph{eigenvalue} of the BVP \eqref{BVPintro} with a corresponding \emph{eigenfunction} $u\in C^2[0,1]$ if $\|u\|_2>0 $ and the pair $(u,\lambda)$ satisfies the perturbed Hammerstein integral equation~\eqref{PHIE}.
\end{definition}
In what follows, we use of the following notation.
\begin{notation}
Let $(X,\vert\vert \cdot \vert\vert)$ be a real Banach space and let $\hat{F}:X\to X$ be continuous. Fix $\rho\in (0, +\infty)$ and set
$$B_\rho(X)=\Set{x\in X \,\big\mid\,\, \vert\vert x \vert\vert < \rho},\ \overline{B_\rho(X)}=\Set{x\in X \,\big\mid\,\, \vert\vert x \vert\vert \le \rho},\ \partial B_\rho(X)=\Set{x\in X \,\big\mid \,\, \vert\vert x \vert\vert = \rho},$$
 $$\Lambda_\rho(\hat{F})=\Set{\lambda\in\mathbb{R}\,\big\mid \, \text{there exists} \ x\in \partial B_\rho(X) \, \text{such that} \, \hat{F}(x)=\lambda x}.$$
\end{notation}
We can now recall the following version of the Birkhoff-Kellogg theorem.
\begin{theorem}[\cite{applbook}, Theorem 10.2]
\label{BKthm}
Let $X$ be an infinite dimensional real Banach space, and let $\hat{F}\colon \overline{B_\rho(X)}\to X$ be a compact operator such that:
\begin{equation*}
\inf_{\vert\vert x \vert\vert = \rho} \vert\vert \hat{F}(x) \vert\vert > 0.
\end{equation*}
Then there exist
$
\lambda_+,\lambda_-\in{\Lambda_\rho(\hat{F})}$, with $\lambda_+>0,\lambda_-<0.$
\end{theorem}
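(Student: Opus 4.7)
The plan is to establish the existence of $\lambda_+$ by contradiction and to recover $\lambda_-$ by applying the same argument to $-\hat F$. Set $m:=\inf_{\|x\|=\rho}\|\hat F(x)\|>0$ and suppose that no pair $(x,\lambda_+)$ with $x\in\partial B_\rho(X)$, $\lambda_+>0$ and $\hat F(x)=\lambda_+ x$ exists. The underlying idea is to renormalize $\hat F$ so as to produce a compact self-map of $\overline{B_\rho(X)}$ to which Schauder's fixed point theorem can be applied.

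First I would define, on the sphere,
\[
\phi_+(x):=\frac{\rho\,\hat F(x)}{\|\hat F(x)\|},\qquad x\in\partial B_\rho(X).
\]
This map is well defined and continuous because $\|\hat F(x)\|\geq m>0$ on $\partial B_\rho(X)$, and it is compact because $\hat F$ is compact and the normalization $y\mapsto \rho y/\|y\|$ is continuous on $\{y:\|y\|\geq m\}$. Clearly $\phi_+$ takes values in $\partial B_\rho(X)$, and under our contradiction hypothesis it has no fixed point there.

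The decisive step is to exploit the infinite-dimensionality of $X$ via Klee's classical result: in any infinite-dimensional Banach space the sphere $\partial B_\rho(X)$ is a continuous retract of the closed ball $\overline{B_\rho(X)}$. Let $r\colon\overline{B_\rho(X)}\to\partial B_\rho(X)$ be such a retraction; then $\Psi:=\phi_+\circ r$ is a continuous, compact self-map of $\overline{B_\rho(X)}$, and Schauder's theorem yields a fixed point $x_0$. Since $\Psi(x_0)\in\partial B_\rho(X)$, necessarily $x_0\in\partial B_\rho(X)$, so $r(x_0)=x_0$ and one reads off $\hat F(x_0)=\lambda_+ x_0$ with $\lambda_+:=\|\hat F(x_0)\|/\rho>0$, contradicting the hypothesis. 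The existence of $\lambda_-<0$ is then obtained by running the same construction with $-\hat F$ in place of $\hat F$: a positive eigenvalue $\mu$ of $-\hat F$ on $\partial B_\rho(X)$ corresponds to the negative eigenvalue $\lambda_-=-\mu$ of $\hat F$.

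The main obstacle is really packaged inside Klee's retraction theorem: in finite dimensions $\partial B_\rho(X)$ is never a retract of $\overline{B_\rho(X)}$ by Brouwer's theorem, so the infinite-dimensional hypothesis is essential and cannot be dropped. Granted that tool, the remainder of the argument is a routine application of Schauder's fixed point theorem together with the lower bound $m>0$, which is exactly what prevents the normalization from producing singularities on the boundary.
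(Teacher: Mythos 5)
Your argument is correct. Note that the paper itself offers no proof of this statement: it is imported verbatim as Theorem~10.2 of the cited monograph of Appell, De Pascale and Vignoli, so there is no ``paper proof'' to compare against line by line. What you have written is the standard proof of the infinite-dimensional Birkhoff--Kellogg invariant direction theorem, and all the steps check out: the lower bound $m>0$ on the sphere makes the normalised map $\phi_+(x)=\rho\hat F(x)/\|\hat F(x)\|$ well defined, continuous and compact (for the compactness one should observe that the closure of $\hat F(\partial B_\rho(X))$ still lies in $\{y:\|y\|\ge m\}$, where the normalisation is continuous, so it carries relatively compact sets to relatively compact sets); Klee's theorem supplies the retraction $r$ of the ball onto the sphere, which is exactly where infinite-dimensionality is used and cannot be avoided; Schauder's theorem applies to the compact self-map $\phi_+\circ r$ of the closed ball; and the sign change $\hat F\mapsto-\hat F$ delivers $\lambda_-$. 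Two small stylistic remarks: the proof is in fact direct, so the contradiction framing is superfluous --- the fixed point of $\phi_+\circ r$ immediately yields $\lambda_+=\|\hat F(x_0)\|/\rho>0$; and it is worth noting explicitly that the hypothesis $\inf_{\|x\|=\rho}\|\hat F(x)\|>0$ is only needed on the sphere, which is precisely why the retraction-plus-Schauder route works where a Leray--Schauder-type argument on the whole ball would not.
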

The following Theorem provides an existence result for two eigenvalues of opposite sign, with corresponding eigenvalues 
possessing a fixed norm.

\begin{theorem}
\label{mainthm}
Let $\rho \in (0,+\infty)$ and set $\Pi_\rho=[0,1]\times[-\rho,\rho]^3$. Assume that the following conditions hold.
\begin{itemize}
\item[$(1)$] $f\in C^0(\Pi_\rho,\R)$ and there exists $\underline{\delta}_\rho\in C^0([0,1],\R_+)$ such that one of the following conditions holds.
\begin{itemize}
\item[$(1a)$] 
$
f(t,u,v,w)\ge \underline{\delta}_\rho(t),\ \text{for every}\ (t,u,v,w)\in \Pi_{\rho}.$
\item[$(1b)$] 
$-f(t,u,v,w)\ge \underline{\delta}_\rho(t),\ \text{for every}\ (t,u,v,w)\in \Pi_{\rho}.$
\end{itemize}
\item[$(2)$] $H_1,H_2: \overline{B_\rho(C^2[0,1])}\to \mathbb{R}$ are continuous and bounded. Let $\underline{\eta_1}_\rho, \underline{\eta_2}_\rho \in \mathbb{R}$ be such that $$H_i[u]\ge \underline{\eta_i}_\rho,\ \text{for every}\ u \in  \partial B_\rho(C^2[0,1])\ \text{and}\ i\in \{1,2\}.$$
\item[(3)] One of the following two condition holds.
\begin{itemize}
\item[$(3a)$] 
Under the assumption $(1a)$ the inequality 
\begin{equation}\label{BKineqa}
6\big(\underline{\eta_1}_\rho+\underline{\eta_2}_\rho\big)+\int_{0}^{1} (1-s)^2(2s+1)\underline{\delta}_\rho(s)\,ds>0
\end{equation}
is satified.
\item[$(3b)$]
Under the assumption $(1b)$ the inequality 
\begin{equation}\label{BKineqb}
6\big(\underline{\eta_1}_\rho+\underline{\eta_2}_\rho\big)+\int_{0}^{1} s^2(3-2s)\underline{\delta}_\rho(s)\,ds>0
\end{equation}
is satified.
\end{itemize} 
\end{itemize} 
Then the BVP~\eqref{BVPintro} has a positive eigenvalue $\lambda_\rho^+$ with an associated eigenfunction $u_\rho^+\in \partial B_\rho(C^2[0,1])$ and a negative eigenvalue $\lambda_\rho^-$ with an associated eigenfunction 
$u_\rho^-\in \partial B_\rho(C^2[0,1]).$
\end{theorem}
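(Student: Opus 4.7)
The plan is to recast the eigenvalue problem as an invariant-direction problem for a suitable operator and then invoke the Birkhoff--Kellogg theorem (Theorem~\ref{BKthm}). Define $T\colon \overline{B_\rho(C^2[0,1])}\to C^2[0,1]$ by
$$T(u)(t)=\gamma_1(t)H_1[u]+\gamma_2(t)H_2[u]+\int_0^1 k(t,s)\,f(s,u(s),u'(s),u''(s))\,ds,$$
so that the perturbed Hammerstein equation~\eqref{PHIE} becomes $u=\lambda\,T(u)$. Any pair $(u,\mu)$ with $\mu\neq 0$, $u\in \partial B_\rho(C^2[0,1])$ and $T(u)=\mu u$ then yields an eigenfunction of the BVP~\eqref{BVPintro} with eigenvalue $\lambda=1/\mu$; in particular, the positive and negative elements of $\Lambda_\rho(T)$ furnished by Theorem~\ref{BKthm} will translate into the required $\lambda_\rho^+>0$ and $\lambda_\rho^-<0$.

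Before invoking Theorem~\ref{BKthm}, I would verify that $T$ is compact on $\overline{B_\rho(C^2[0,1])}$. Continuity follows from the continuity of $f$ on the compact set $\Pi_\rho$ and of $H_1,H_2$, passing the limit under the integral by dominated convergence. For compactness of the image, the boundedness of $H_i$ and of $f$ on $\Pi_\rho$, combined with the explicit boundedness of $k$, $\partial_t k$ and $\partial_t^2 k$, gives a uniform $C^2$-bound; an Arzel\`a--Ascoli argument (splitting the integral representing $(Tu)''$ at $s=t$, in accordance with the piecewise formula for $\partial_t^2 k$) yields the equicontinuity of $T(u)$, $(Tu)'$ and $(Tu)''$.

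The heart of the proof is the Birkhoff--Kellogg condition $\inf_{\|u\|_2=\rho}\|T(u)\|_2>0$, and this is where hypotheses~(2)--(3) are used. Since $\gamma_1''\equiv\gamma_2''\equiv 6$, twice differentiating yields
$$(Tu)''(t)=6\big(H_1[u]+H_2[u]\big)+\int_0^1\frac{\partial^2 k}{\partial t^2}(t,s)\,f(s,u(s),u'(s),u''(s))\,ds.$$
Under~$(1a)$ I would evaluate at $t=0$, where $\partial_t^2 k(0,s)=(1-s)^2(2s+1)\ge 0$ and $f\ge \underline{\delta}_\rho\ge 0$, obtaining
$$(Tu)''(0)\ge 6\big(\underline{\eta_1}_\rho+\underline{\eta_2}_\rho\big)+\int_0^1(1-s)^2(2s+1)\,\underline{\delta}_\rho(s)\,ds>0$$
by~\eqref{BKineqa}. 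Under~$(1b)$, where $\partial_t^2 k(1,s)=-s^2(3-2s)\le 0$ and $-f\ge\underline{\delta}_\rho$, evaluation at $t=1$ gives by the same sign-tracking
$$(Tu)''(1)\ge 6\big(\underline{\eta_1}_\rho+\underline{\eta_2}_\rho\big)+\int_0^1 s^2(3-2s)\,\underline{\delta}_\rho(s)\,ds>0$$
by~\eqref{BKineqb}. In either case $\|T(u)\|_2\ge \|(Tu)''\|_\infty$ is bounded below by a positive constant independent of $u\in\partial B_\rho(C^2[0,1])$. The main technical obstacle I anticipate is the compactness verification of $T$ in $C^2[0,1]$, where care is required because $\partial_t^2 k$ is discontinuous along the diagonal $s=t$; by contrast, the sign-tracking step that clinches the lower bound is a direct consequence of the explicit kernel formulas already assembled in the preceding setup.
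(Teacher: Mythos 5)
Your proposal is correct and follows essentially the same route as the paper: the same operator $T$, compactness via Arzel\`a--Ascoli plus the finite-rank boundary term, and the lower bound $\inf_{\|u\|_2=\rho}\|Tu\|_2>0$ obtained by evaluating $(Tu)''$ at $t=0$ under $(3a)$ and at $t=1$ under $(3b)$, followed by Theorem~\ref{BKthm}. Your explicit remark that an invariant direction $T(u)=\mu u$ with $\mu\neq 0$ yields the BVP eigenvalue $\lambda=1/\mu$ (preserving sign) is a point the paper leaves implicit, and is a welcome clarification.
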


\begin{proof}
Let $T\colon \overline{B_\rho(C^2[0,1])}\to C^2[0,1]$ be defined by 
\begin{equation*}
Tu(t)=Fu(t)+\Gamma u(t),
\end{equation*}
where 
\begin{equation*}
Fu(t)=\int_{0}^{1} k(t,s)f(s,u(s),u'(s),u''(s))\,ds,
\end{equation*}
and
\begin{equation*}
\Gamma u(t)=\gamma_1(t) H_1[u]+\gamma_2(t) H_2[u].
\end{equation*}
Note that $F$ is compact as consequence of the Arzel\`{a}-Ascoli theorem (see~\cite{Webb-Cpt}),  
while the compactness of $\Gamma$ is due to the fact that it is a continuous finite rank operator; therefore $T$ is compact. We now distinguish two cases.

Firstly assume that $(3a)$ holds. Then, for  $u\in\partial B_\rho(C^2[0,1])$, we have that
\begin{multline}\label{pfa}
 \vert\vert Tu \vert\vert_2\ge \vert\vert (Tu)'' \vert\vert_\infty\ge \big\mid (Tu)''(0)\big\mid\ge(Tu)''(0)  \\
 =6\big(H_1[u]+H_2[u]\big)+\int_{0}^{1} \bigg[\frac{\partial^2 }{\partial t^2} k(t,s)\bigg]_{t=0}f(s,u(s),u'(s),u''(s))\,ds \\
=6\big(H_1[u]+H_2[u]\big)+\int_{0}^{1} (1-s)^2(2s+1)f(s,u(s),u'(s),u''(s))\,ds \\
\ge 6\big(\underline{\eta_1}_\rho+\underline{\eta_2}_\rho\big)+\int_{0}^{1} (1-s)^2(2s+1)\underline{\delta}_{\rho}(s) \,ds.
\end{multline}

Now suppose that  $(3b)$ holds. Then, for  $u\in\partial B_\rho(C^2[0,1])$ we have that
\begin{multline}\label{pfb}
 \vert\vert Tu \vert\vert_2\ge \vert\vert (Tu)'' \vert\vert_\infty\ge \big\mid (Tu)''(1)\big\mid\ge(Tu)''(1)\\
 =6\big(H_1[u]+H_2[u]\big)+\int_{0}^{1} \bigg[\frac{\partial^2 }{\partial t^2} k(t,s)\bigg]_{t=1}f(s,u(s),u'(s),u''(s))\,ds \\
= 6\big(H_1[u]+H_2[u]\big)+\int_{0}^{1} s^2(2s-3)f(s,u(s),u'(s),u''(s))\,ds \phantom{spac}\\
=6\big(H_1[u]+H_2[u]\big)+\int_{0}^{1} s^2(3-2s)\big(-f(s,u(s),u'(s),u''(s))\big)\,ds \\
\ge 6\big(\underline{\eta_1}_\rho+\underline{\eta_2}_\rho\big)+\int_{0}^{1} s^2(3-2s)\underline{\delta}_{\rho}(s)\,ds. 
\end{multline}  

Note that the RHS of the inequalities \eqref{pfa} and \eqref{pfb} 
do not depend on the particular $u$ chosen. In both cases, we have that 
\begin{equation*}
\inf\limits_{\vert\vert u \vert\vert_2=\rho} \vert\vert Tu \vert\vert_2>0,
\end{equation*}
and the result follows by Theorem~\ref{BKthm}.
\end{proof}
The following Corollary provides an existence result for uncountably many couples of eigenvalues--eigenfunctions.
\begin{corollary}
\label{CorEig}
If in addition to the hypotheses of Theorem~\ref{mainthm} we assume that $\rho$ can be chosen arbitrarily in $(0,+\infty)$, then for every $\rho$ there exist two eigenfunctions $u_{\rho}^{+}\in\partial B_\rho(C^2[0,1])$ and $u_{\rho}^{-}\in\partial B_\rho(C^2[0,1])$ of the BVP $(\ref{BVPintro})$ to which corresponds, respectively, an eigenvalue $\lambda_{\rho}^{+}>0$ and an eigenvalue $\lambda_{\rho}^{-}<0$.
\end{corollary}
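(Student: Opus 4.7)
The plan is to observe that the Corollary is a direct consequence of Theorem~\ref{mainthm} applied parametrically. Concretely, I would fix an arbitrary $\rho\in(0,+\infty)$, and note that since the hypotheses of Theorem~\ref{mainthm} are assumed to hold at this $\rho$, the theorem supplies a positive eigenvalue $\lambda_\rho^+$ together with an eigenfunction $u_\rho^+\in\partial B_\rho(C^2[0,1])$ and a negative eigenvalue $\lambda_\rho^-$ together with an eigenfunction $u_\rho^-\in\partial B_\rho(C^2[0,1])$. Since $\rho$ was arbitrary in $(0,+\infty)$, repeating this argument for each $\rho$ produces the family indexed by $\rho$, and in particular uncountably many distinct eigenfunction--eigenvalue pairs, since the eigenfunctions obtained at distinct levels $\rho\neq \rho'$ have distinct $C^2$-norms and hence cannot coincide.

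There is essentially no obstacle beyond bookkeeping: one only has to recognize that the conditions $(1)$, $(2)$, $(3)$ of Theorem~\ref{mainthm} depend on the radius $\rho$ through the data $\underline{\delta}_\rho$, $\underline{\eta_1}_\rho$, $\underline{\eta_2}_\rho$ and the associated Birkhoff--Kellogg inequality \eqref{BKineqa} or \eqref{BKineqb}, and that the added hypothesis of the Corollary is precisely the requirement that such data exist and fulfil the relevant inequality for every positive $\rho$. Thus the proof is a one-line appeal: \emph{apply Theorem~\ref{mainthm} for each $\rho\in(0,+\infty)$}, with no further estimate required.
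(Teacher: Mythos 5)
Your proposal is correct and matches the paper's (implicit) argument: the Corollary is stated without proof precisely because it is the immediate consequence of applying Theorem~\ref{mainthm} at each $\rho\in(0,+\infty)$. Your added remark that eigenfunctions at distinct levels $\rho\neq\rho'$ are distinct because they have different $C^2$-norms is a correct and harmless elaboration of why the family is uncountable.
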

We conclude with an example that illustrates the applicability of the previous theoretical results.
\begin{example}
We consider the BVP
\begin{equation}
\label{BVPex}
\begin{cases}
u'''(t)+\lambda te^{\mid u(t)\mid}[1+(u''(t))^2]=0, \ \text{for $t\in(0,1)$}, \\
u(0)=\dfrac{\lambda}{1+u^2\big(\frac{1}{2}\big)}, \\
u(1)=\dfrac{\lambda}{40}\sin(\int_{0}^{1}t^3u''(t)dt), \\
\int_{0}^{1} u(t)dt=0.
\end{cases}
\end{equation}
Fix $\rho\in (0,+\infty)$.
Note that we have
\begin{align*}
f(t,u,v,w)\ge t:&= \underline{\delta}_\rho(t),\ \text{for every}\ (t,u,v,w)\in \Pi_{\rho},\\
1\geq H_1[u] \geq \dfrac{1}{1+\rho^2}:&= \underline{\eta_1}_\rho,\ \text{for every}\ u\in  \partial B_\rho(C^2[0,1]),\\
\dfrac{1}{40}\geq H_2[u] \geq -\dfrac{1}{40}:&=\underline{\eta_2}_\rho,\ \text{for every}\ u\in  \partial B_\rho(C^2[0,1]).
\end{align*}
Thus the inequality \eqref{BKineqa} reads
$$
6\Big( \dfrac{1}{1+\rho^2}-\dfrac{1}{40}\Big) +\int_{0}^{1} (1-s)^2(2s+1)s\,ds=\dfrac{6}{1+\rho^2}>0.
$$
Therefore we can apply Corollary~\ref{CorEig}, obtaining uncountably many pairs of positive and negative eigenvalues for the BVP~\eqref{BVPex}, with corresponding eigenfunctions that posses a localized $C^2$-norm.
\end{example}
\section*{Acknowledgements}
The authors would like to thank the anonymous Referee for the careful reading of the manuscript and
the constructive comments.
G.~Infante is a member  of the ``Gruppo Nazionale per l'Analisi Matematica, la Probabilit\`a e le loro Applicazioni'' (GNAMPA) of the Istituto Nazionale di Alta Matematica (INdAM) and of the UMI Group TAA ``Approximation Theory and Applications''.  
G.~Infante was partly funded by the Research project of MUR - Prin~2022 “Nonlinear differential problems with applications to real phenomena” (Grant Number: 2022ZXZTN2).


\begin{thebibliography}{999}

\bibitem{and-dav}
D. R. Anderson and J. M. Davis, Multiple solutions and eigenvalues for third-order right focal boundary value problems, 
\textit{J. Math. Anal. Appl.}, \textbf{267} (2002), 135--157.

\bibitem{applbook} J. Appell, E. De Pascale and A. Vignoli, \textit{Nonlinear spectral
theory}, Walter de Gruyter \& Co., Berlin, (2004).

\bibitem{ca-lo-min}
A. Cabada, L. L{\'o}pez-Somoza and F. Minh{\'o}s, Existence, non-existence and multiplicity results for a third order eigenvalue three-point boundary value problem, 
\textit{J. Nonlinear Sci. Appl.}, \textbf{10} (2017), 5445--5463.

\bibitem{acgi2} A. Calamai and G. Infante, 
An affine Birkhoff--Kellogg type result in cones with applications to functional differential equations,
\textit{Math.\ Meth.\ Appl.\ Sci.}, \textbf{46} (2023), 11897--11905.

\bibitem{jg-jw-na} 
J. R. Graef and J. R. L. Webb, Third order boundary value problems with nonlocal boundary conditions,
\textit{Nonlinear Anal.}, \textbf{71} (2009), 1542--1551.

\bibitem{jg-by1} J. Graef and B. Yang, Multiple positive solutions to a three point third order boundary value
problem, \textit{Discrete Contin. Dyn. Syst.}, \textbf{vol. suppl.} (2005), 337--344.

\bibitem{jg-by2}
J. Graef and B. Yang, Positive solutions of a third order nonlocal boundary value problem,
\textit{Discrete Contin. Dyn. Syst. Ser. S.}, \textbf{1} (2008), 89--97.

\bibitem{giaml22}
G. Infante, On the solvability of a parameter-dependent cantilever-type BVP, \textit{Appl. Math. Lett.}, \textbf{132} (2022), 108090.

\bibitem{gi-pp-MB}
 G. Infante and P. Pietramala, A third order boundary value problem subject to nonlinear boundary conditions,
\textit{Math. Bohem.}, \textbf{135} (2010), 113--121. 

\bibitem{Smi1} S. Smirnov, Existence of a unique solution for a third-order boundary value problem with nonlocal conditions of integral type, 
\textit{Nonlinear Anal., Model. Control},  \textbf{26} (2021), 914--927.
		
\bibitem{Smi2} 	S. Smirnov, 
Existence of sign-changing solutions for a third--order boundary value problem with nonlocal conditions of integral type,
\textit{Topol. Methods Nonlinear Anal.}, \textbf{62} (2023), 377--384.

\bibitem{Smi3} 	S. Smirnov, 
Multiplicity of positive solutions for a third-order boundary value problem with nonlocal conditions of integral type,
\textit{Miskolc Math. Notes}, \textbf{25} (2024), 967--975.

\bibitem{Gab-Mira1}
G. Szajnowska and M. Zima,
Positive solutions to a third order nonlocal boundary value problem with a parameter,
\textit{Opusc. Math.}, \textbf{44} (2024), 267--283.
 
 \bibitem{Gab-Mira2}
G. Szajnowska and M. Zima,
A fixed point index approach to a third order nonlocal boundary value problem 
\textit{Topol. Methods Nonlinear Anal.}, to appear.

 \bibitem{Webb-Con}
J. R. L. Webb,
Higher order non-local $(n-1,1)$ conjugate type boundary value problems, \textit{Mathematical models in engineering, biology and medicine}, \textbf{vol. 1124} (2009),  332--341.


 \bibitem{Webb-Cpt}
J. R. L. Webb, Compactness of nonlinear integral operators with discontinuous and with singular kernels, \textit{J. Math. Anal. Appl.}, \textbf{509} (2022),  Paper No. 126000, 17 pp.



\end{thebibliography}
\end{document}